\newtheorem{theor}{Theorem}[section]
\newtheorem{rem}{Remark}
\newtheorem{prop}[theor]{Proposition}
\newtheorem{lem}[theor]{Lemma}
\newtheorem*{guess}{Conjecture}
\newcommand{\s}{Szeg\H{o} kernel}
\newcommand{\CP}{\mathds{C}\mathrm{P}}
\newcommand{\de}{\partial}
\def\bC{{\mathbb C}}
\newcommand{\KE}{K\"{a}hler-Einstein\ }
\newcommand{\R}{\mathbb{R}}
\newcommand{\C}{\mathbb{C}}
\newcommand{\N}{\mathbb{N}}
\newcommand{\K}{K\"{a}hler\ }
\begin{document}

\title[A characterization of complex space forms]{A characterization of complex space forms via Laplace operators}
\author{Andrea Loi, Filippo Salis, Fabio Zuddas}
\address{Dipartimento di Matematica e Informatica, Universit\`a di Cagliari\\Via Ospedale 72, 09124 Cagliari (Italy)}
\email{loi@unica.it, filippo.salis@gmail.com, fabio.zuddas@unica.it}

\thanks{}
\subjclass[2010]{} 
\keywords{\K\ manifolds; Hermitian symmetric spaces; \K Laplacian}

\begin{abstract} Inspired by the work of Z. Lu and G. Tian \cite{lutian}, in this paper we address the problem of studying those \K\ manifolds  
satisfying the $\Delta$-property, i.e. such that  on a neighborhood of each of its points  the $k$-th power of the  \K Laplacian is a polynomial function  of the  complex Euclidean Laplacian,
for all positive integer $k$ (see below for its definition).
We prove two results: 1. if  a \K\ manifold  satisfies the $\Delta$-property then its curvature tensor is parallel; 2. if  an Hermitian symmetric space of classical type  satisfies  the $\Delta$-property then it is a  complex space form  (namely it has constant holomorphic sectional curvature).
In view of these results we believe that if a complete and simply-connected  \K\ manifold  satisfies the $\Delta$-property then it is a complex space form.

\end{abstract}
\maketitle
\tableofcontents

\section{Introduction and statement of the main results}

Let $\Delta$ be the \K Laplacian on an $n$-dimensional \K manifold $(M, g)$ i.e., in local coordinates $z = (z_1, \dots, z_n)$, 
$$\Delta=\sum_{i,j=1}^n g^{i\bar j}\frac{\de^2}{\de z_j\de\bar z_i},$$
where $ g^{i\bar j}$ denotes the inverse matrix of the \K metric. We define the complex Euclidean Laplacian with respect to $z$ as 
the differential operator
$$\Delta_c^z= \sum_{i=1}^n\frac{\de^2}{\de z_i\de\bar z_i}.$$ 

A key point in  Lu and Tian's proof of the local rigidity theorem (\cite{lutian} Theor. 1.2) supporting their conjecture about the characterization of the Fubini-Study metric $g_{FS}$ on $\CP^n$ through the vanishing of the log-term of the universal bundle, consists in a special relation between \K and complex Euclidean Laplacians that occurs on $(\CP^n, g_{FS})$.
More precisely, Lu and Tian have shown the following:
\begin{theor}[\cite{lutian} Prop. 6.1]\label{cpn}
In the center $x_0$ of the affine coordinate system $z$ of $(\CP^n, g_{FS})$,  every  smooth function $\phi$ defined in a neighborhood of $x_0$  fulfills   the following equations for every positive integer $k$
 \begin{equation}\label{affine}
 \Delta^k\phi(0)=p_k(\Delta_c^z)\phi(0),
 \end{equation}
where $p_k$ is a monic polynomial of degree $k$ with real coefficients (and consequently constant term equal to zero).
\end{theor}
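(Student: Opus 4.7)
The plan is to exploit the abundance of isometries of $(\CP^n, g_{FS})$ that fix $x_0$. The full unitary group $\U(n+1)$ acts by isometries, and the isotropy at $x_0$ acts on the affine coordinates $z=(z_1,\dots,z_n)$ via the standard representation of $\U(n)$. Because any isometry $\sigma$ fixing $x_0$ satisfies $\Delta^k(\phi\circ\sigma)(x_0)=\Delta^k\phi(x_0)$, the linear functional $\phi\mapsto\Delta^k\phi(0)$ on germs of smooth functions at $x_0$ is invariant under the substitution $\phi(z)\mapsto\phi(U^{-1}z)$ for every $U\in\U(n)$.

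Since $\Delta^k$ is a differential operator of order $2k$, this functional depends only on the Taylor coefficients $c_{\alpha\beta}=\partial_z^\alpha\partial_{\bar z}^\beta\phi(0)$ with $|\alpha|+|\beta|\le 2k$. Invariance under the central circle $\{e^{i\theta}I\}\subset\U(n)$ eliminates the contribution of every monomial $z^\alpha\bar z^\beta$ with $|\alpha|\neq|\beta|$, so only coefficients with $|\alpha|=|\beta|=l$, $0\le l\le k$, can survive.

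I would then invoke the classical fact that the ring of $\U(n)$-invariant polynomials in $(z,\bar z)$ is generated by $|z|^2$. Consequently, the $\U(n)$-invariant subspace of polynomials of bidegree $(l,l)$ is one-dimensional, spanned by $|z|^{2l}$, and by Schur's lemma every $\U(n)$-invariant linear functional on that bi-homogeneous piece is a scalar multiple of any fixed nonzero invariant one. Since $(\Delta_c^z)^l$ is $\U(n)$-invariant and satisfies $(\Delta_c^z)^l(|z|^{2l})(0)\neq 0$, this yields
\[
\Delta^k\phi(0)=\sum_{l=0}^k A_l\,(\Delta_c^z)^l\phi(0)
\]
for certain constants $A_l$. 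Reality of the $A_l$ follows by evaluating both sides on the real functions $|z|^{2l}$ and noting that $(\Delta_c^z)^{l'}(|z|^{2l})(0)$ vanishes unless $l'=l$, in which case it is a positive real number.

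To finish, I would identify the extremal coefficients. Since $\Delta$ annihilates constants, $A_0=0$; and since $g^{i\bar j}(0)=\delta_{ij}$ in affine coordinates, the principal symbol of $\Delta^k$ at $x_0$ coincides with that of $(\Delta_c^z)^k$, forcing $A_k=1$. Thus $p_k(x):=\sum_{l=1}^k A_l\,x^l$ is monic of degree $k$, has real coefficients, and vanishes at the origin. The main subtlety I anticipate lies in the invariant-theory step: one must be careful that \emph{all} Taylor coefficients contributing to $\Delta^k\phi(0)$ — not merely the top-order ones — are genuinely controlled by the $\U(n)$-representation-theoretic dimension count, so that no extra degrees of freedom slip in at intermediate orders.
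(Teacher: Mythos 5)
Your argument is correct, and it takes a genuinely different route from the paper. The paper does not prove Theorem \ref{cpn} in isolation: it deduces it (together with the cases of $\C^n$ and $\C H^n$) from the more general Proposition \ref{radial}, whose proof is an induction on $k$ using the explicit formula for the inverse matrix $g^{i\bar j}$ of a radial \K metric, testing the identity on the monomials $|z^P|^2$ and $z^P\bar z^Q$ with $P\neq Q$, and extracting an explicit recursion expressing the coefficients $a_{k+1,p}$ of $p_{k+1}$ in terms of those of $p_k$. You instead exploit the isotropy representation: the stabilizer of $x_0$ acts on the affine chart by the standard linear $\U(n)$-action, the functional $\phi\mapsto\Delta^k\phi(0)$ is $\U(n)$-invariant and depends only on the $2k$-jet, and the first fundamental theorem for $\U(n)$ (invariants generated by $|z|^2$) together with the one-dimensionality of the space of invariant functionals on each bidegree-$(l,l)$ piece pins the functional down to $\sum_l A_l(\Delta_c^z)^l(\cdot)(0)$. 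The subtlety you flag at the end is in fact already disposed of by your own argument: every Taylor coefficient entering $\Delta^k\phi(0)$ lies in some bihomogeneous piece of bidegree $(a,b)$ with $a+b\le 2k$, the restriction of an invariant functional to each such piece is again invariant, and these restrictions vanish for $a\neq b$ and are one-dimensional for $a=b$, so no intermediate-order degrees of freedom can survive. Your approach is computation-free and conceptually transparent; moreover, since a radial potential $\Phi(|z|^2)$ is itself $\U(n)$-invariant, so that $z\mapsto Uz$ is a local isometry fixing the center, it actually yields Proposition \ref{radial} in the same breath (monicity again coming from the normalization $\Phi'(0)=1$, i.e. $g_{i\bar j}(0)=\delta_{ij}$). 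What the paper's computational proof buys in exchange is an explicit recursive formula for the coefficients of $p_k$ in terms of the derivatives of the potential at the origin, which the symmetry argument does not provide.
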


Since complex projective spaces are homogeneous manifolds, the theorem actually states that for $M = (\CP^n, g_{FS})$ the following property holds:

\bigskip

{\em {\bf ($\Delta$-property)} For any arbitrary point  $x \in M$ there exists a coordinate system $z$ centered at $x$ such that \eqref{affine} holds for any positive integer $k$, being $p_k$ a monic polynomial of degree $k$ independent of $x$ with real coefficients.}

\bigskip

The $\Delta$-property is trivially verified also for $\C^n$ endowed with the flat \K metric $g_0$ and for the complex hyperbolic space $\C H^n = \{ z \in \C^n \ | \ \sum_{i=1}^n |z_i|^2 < 1 \}$ endowed with the \K metric $g_{hyp}$ whose associated \K form is $\omega_{hyp} = -\sqrt{-1} \partial \bar \partial \log (1 - \sum_{i=1}^n |z_i|^2)$. This immediately follows again by homogeneity and from the following: 

\begin{prop}\label{radial}
Condition \eqref{affine} is satisfied for any positive integer $k$ in the center of a radial metric\footnote{Namely a \K metric admitting a \K potential which depends only on the sum $|z|^2 = |z_1|^2 + \mathellipsis+ |z_n|^2$ of the moduli of a local coordinates’ system $z$.}.
\end{prop}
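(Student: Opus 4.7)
My approach would be to exploit the $U(n)$-symmetry that comes from the radial form of the potential. After the linear rescaling $z\mapsto z/\sqrt{\Phi'(0)}$ (which preserves the radial structure), we may assume $\Phi'(0)=1$, so that $g_{i\bar j}(0)=\delta_{ij}$ and $\Delta\phi(0)=\Delta_c^z\phi(0)$; this already disposes of the case $k=1$ with $p_1(t)=t$.

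Since $\Phi(|Az|^2)=\Phi(|z|^2)$ for every $A\in U(n)$, the metric $g$ is $U(n)$-invariant, and hence so is the \K Laplacian: $\Delta(\phi\circ A)=(\Delta\phi)\circ A$. Evaluating at the fixed point $0$ yields $\Delta^k\phi(0)=\Delta^k(\phi\circ A)(0)$ for every $k$, so the linear functional $L_k\colon\phi\mapsto\Delta^k\phi(0)$ is $U(n)$-invariant. Since $\Delta^k$ is a differential operator of order at most $2k$, $L_k$ depends only on the $(2k)$-jet of $\phi$ at the origin and is therefore represented by a constant-coefficient differential operator $D_k$ with $L_k(\phi)=D_k\phi(0)$; by construction $D_k$ is itself $U(n)$-invariant.

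The central step, and crux of the argument, is the following fact from classical invariant theory: the subalgebra of $U(n)$-invariant elements of $\C[\partial_{z_1},\ldots,\partial_{z_n},\partial_{\bar z_1},\ldots,\partial_{\bar z_n}]$ is generated by the single contraction $\sum_{i=1}^n\partial_{z_i}\partial_{\bar z_i}=\Delta_c^z$. Equivalently, the space of $U(n)$-invariant constant-coefficient differential operators of bi-order $(a,b)$ is trivial for $a\neq b$ and one-dimensional, spanned by $(\Delta_c^z)^a$, for $a=b$ (apply Schur's lemma to the irreducible $U(n)$-representation $\mathrm{Sym}^a(\C^n)$). Consequently $D_k=p_k(\Delta_c^z)$ for some polynomial $p_k$ of degree at most $k$.

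It remains to verify the stated properties of $p_k$. It is monic of degree $k$ because, by the normalization, the principal symbol of $\Delta^k$ at $0$ coincides with that of $(\Delta_c^z)^k$. It has zero constant term because $\Delta$, and hence $\Delta^k$, annihilates constants. Its coefficients are real because both $\Delta$ and $\Delta_c^z$ are real operators, and testing $p_k(\Delta_c^z)$ against real test functions such as $|z_1|^{2j}$ at the origin extracts each coefficient separately. The main obstacle is the invariant-theoretic identification of $D_k$ as a polynomial in $\Delta_c^z$; the remaining verifications are routine.
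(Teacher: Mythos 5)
Your argument is correct, but it takes a genuinely different route from the paper's. The paper works directly with the explicit inverse matrix $g^{i\bar j}=\frac{1}{\Phi'}\bigl(\delta_{ij}-\frac{\Phi''}{\Phi'+|z|^2\Phi''}z_j\bar z_i\bigr)$ and proves \eqref{affine} by induction on $k$: it decomposes $\phi$ into monomials $z^P\bar z^Q$, notes that both sides vanish when $P\neq Q$, and for $\phi=|z^P|^2$ uses Leibniz's rule to extract an explicit recursion $a_{k+1,p}=a_{k,p-1}+\sum_{l\ge p}a_{k,l}\bigl(C^{\Phi'^{-1}}_{p-1,l}-p^2C^{\psi}_{p,l}\bigr)$ determining the coefficients of $p_{k+1}$ from those of $p_k$ (monicity coming from $C^{\Phi'^{-1}}_{h,h}=1$ after the normalization $\Phi'(0)=1$). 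You instead observe that a radial potential makes the metric $\mathrm{U}(n)$-invariant, so the functional $\phi\mapsto\Delta^k\phi(0)$ is a $\mathrm{U}(n)$-invariant constant-coefficient operator on $2k$-jets, and then invoke the first fundamental theorem for $\mathrm{U}(n)$ (equivalently Schur's lemma on $\mathrm{Sym}^a(\C^n)$) to conclude it must be a polynomial in $\Delta^z_c$; monicity, reality and vanishing constant term follow from the principal symbol at $0$, reality of $\Delta$, and $\Delta 1=0$. Your key invariant-theoretic input is standard and correctly applied, and the jet-functional reduction is sound, so the proof goes through; note that your monomial test functions $|z_1|^{2j}$ play the same role as the paper's $|z^P|^2$, and that the two sides' vanishing on $z^P\bar z^Q$ with $P\neq Q$ is exactly your bidegree/central-$\mathrm{U}(1)$ argument in disguise. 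What your approach buys is conceptual economy (no computation with the inverse metric, and it applies verbatim to any $\mathrm{U}(n)$-invariant K\"ahler metric); what the paper's buys is an effective recursion for the coefficients $a_{k,l}$ in terms of $\Phi$, which your Schur-type argument cannot produce. One cosmetic point: the rescaling should send $z$ to $\sqrt{\Phi'(0)}\,z$ rather than $z/\sqrt{\Phi'(0)}$ to achieve $g_{i\bar j}(0)=\delta_{ij}$, but this is a harmless normalization slip.
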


This proposition generalizes Theorem \ref{cpn}. Its proof (which follows the same outlines of Lu and Tian's result) will be given in the next Section. 

By the above considerations it is then natural to  try to classify those \K\ manifolds satisfying the $\Delta$-property.
In this direction we have the following two theorems.

\begin{theor}\label{mainteor1}
Let $(M,g)$ be a \K  manifold which satisfies the $\Delta$-property. Then its curvature tensor is parallel.
\end{theor}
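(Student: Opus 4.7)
The strategy is to extract algebraic consequences of the $\Delta$-property at an arbitrary point $x_0 \in M$ by applying the identity $\Delta^k \phi(0) = p_k(\Delta_c^z) \phi(0)$ to test monomials $\phi = z^\alpha \bar z^\beta$. Since the polynomials $p_k$ are universal (independent of $x_0$), the resulting constraints on the Taylor coefficients of the metric at $0$ will be uniform across $M$, and should be strong enough to force the curvature tensor $R$ to be parallel.

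The first step is to observe that the $\Delta$-property itself normalizes the chosen coordinates. Indeed, the $k=1$ identity applied to $\phi = z_i \bar z_j$ immediately yields $g^{i\bar j}(0) = \delta_{ij}$. Likewise, the $k=2$ identity applied to a mixed monomial such as $z_i z_j \bar z_k$ gives $\bar\partial_j g^{i\bar k}(0) + \bar\partial_i g^{j\bar k}(0) = 0$, which combined with the \K identity $\bar\partial_j g_{i\bar k} = \bar\partial_k g_{i\bar j}$ forces all first derivatives of $g_{i\bar j}$ at $0$ to vanish. More generally, suitable identities on mixed monomials should progressively bring the coordinates into K-normal (Bochner) form up to any prescribed order, so that the Taylor expansion of $g_{i\bar j}$ at $0$ becomes expressible purely in terms of the curvature tensor and its covariant derivatives at $x_0$.

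The first nontrivial curvature identity arises from $k = 2$ on $\phi = z_i \bar z_j$: one computes $\Delta^2(z_i \bar z_j)(0) = \sum_p \partial_p \bar\partial_p g^{i\bar j}(0) = R_{i\bar j}(x_0)$, while $p_2(\Delta_c^z)(z_i \bar z_j)(0)$ equals a universal constant times $\delta_{ij}$; hence $(M, g)$ is \KE with a globally constant Einstein constant. Iterating the procedure with higher $k$ and higher-degree monomials generates a tower of algebraic identities on $R(x_0), \nabla R(x_0), \nabla^2 R(x_0), \ldots$ with universal coefficients.

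The main obstacle --- and the technical heart of the argument --- is to show that this tower is strong enough to force $\nabla^j R(x_0) = 0$ for every $j \geq 1$. The expected approach is an induction on $j$: at each stage, a careful choice of test monomial $\phi$ and order $k$ should isolate the next covariant derivative of $R$, whose universality would then pin it down to a universal scalar combination of the metric, hence to a parallel tensor. Managing the combinatorics of expanding $\Delta^k(z^\alpha \bar z^\beta)(0)$ as a polynomial in the covariant derivatives of $R$ is where most of the work lies.
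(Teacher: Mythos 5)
Your opening steps are correct and coincide with what the paper actually does: the $k=1$ identity forces the coordinates to be unitary at the origin, the odd (third--order) coefficient of the $k=2$ identity combined with the K\"ahler symmetry $\partial_k g_{i\bar j}=\partial_i g_{k\bar j}$ kills the first derivatives of the metric, and the balanced part of the $k=2$ identity gives the Einstein condition (this is the paper's Theorem \ref{ke}). But the proof stops exactly where the theorem begins. You explicitly defer ``the technical heart'' to an unspecified induction producing $\nabla^j R(x_0)=0$ for \emph{all} $j\ge 1$, with no indication of which monomials and which $k$ isolate which derivative, nor why the resulting linear conditions are nondegenerate. That induction is both unproven and unnecessary: parallelism of the curvature tensor is the single statement $\nabla R=0$; higher covariant derivatives come for free. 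Likewise, your hope of bringing the coordinates into Bochner form ``to any prescribed order'' is neither needed nor actually granted by the $\Delta$-property, and the universality of the $p_k$ plays no role in this theorem (the paper's remark after the proof makes the point that $k=1,\dots,4$ at each point, with arbitrary polynomials, already suffices).

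The missing idea is a parity observation that closes the argument at $k=4$. Whatever $p_k$ is, $p_k(\Delta_c^z)\phi(0)$ only involves \emph{balanced} derivatives $\partial^l\bar\partial^l\phi(0)$; hence in the expansion of $\Delta^k\phi(0)$ every coefficient of an unbalanced derivative of $\phi$ must vanish. Take $k=4$ and look at the fifth--order coefficients (three holomorphic, two antiholomorphic indices). Since the first derivatives of $g^{i\bar j}$ already vanish at the origin, the only surviving contribution is the symmetrization over the holomorphic (and over the antiholomorphic) indices of $\partial_{h\bar k l}\,g^{i\bar j}(0)$, giving the six--term identity \eqref{5der}. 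Transporting this to holomorphic normal coordinates --- legitimate because the $k=1,2$ steps control the coordinate change to second order (unitary Jacobian, vanishing Hessian) --- and using that in normal coordinates $\partial_{\gamma\bar\delta\epsilon}g^{\alpha\bar\beta}=-\partial_{\gamma\bar\delta\epsilon}g_{\alpha\bar\beta}$ is already fully symmetric in $\alpha,\gamma,\epsilon$ and in $\bar\beta,\bar\delta$, all six terms of the symmetrization coincide, so each vanishes. That is precisely $\nabla R(x_0)=0$, and $x_0$ was arbitrary. Without this (or an equivalent) concrete mechanism, your proposal is a plausible plan rather than a proof.
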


\begin{theor}\label{mainteor2}
An Hermitian symmetric space of classical type  satisfying the $\Delta$-property is a complex space form.
\end{theor}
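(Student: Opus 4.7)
By homogeneity of Hermitian symmetric spaces, it suffices to verify the $\Delta$-property at a single basepoint $o$, and by the $k=1$ case of the property we may choose local holomorphic coordinates $z$ at $o$ with $g_{i\bar j}(o) = \delta_{ij}$. Since $M$ is a symmetric space its curvature is parallel, so every higher derivative of the metric at $o$ is determined algebraically (via the K\"ahler normal form) by the single tensor $R_{i\bar j k\bar l}(o)$.

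The first step is to translate the sequence of identities $\Delta^k\phi(o) = p_k(\Delta_c^z)\phi(o)$ into algebraic conditions on $R_{i\bar j k\bar l}(o)$. Expanding $\Delta^k$ in K-normal coordinates, and using that the first derivatives of $g^{i\bar j}$ vanish at $o$, one obtains
\begin{equation*}
\Delta^k\phi(o) \;=\; \Delta_c^k\phi(o) \;+\; \sum_{s=1}^{k-1} Q_{k,s}(R)\bigl(\phi^{(2s)}\bigr)(o),
\end{equation*}
where $Q_{k,s}(R)$ is a polynomial in $R$ acting on the collection of mixed derivatives $\phi^{(2s)}$ of total order $2s$. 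The $\Delta$-property is equivalent to the statement that for every $s<k$ the symbol of $Q_{k,s}(R)$ be a scalar multiple of the symbol of $\Delta_c^s$. At $k=2$ this recovers the K\"ahler-Einstein condition $\text{Ric}_{i\bar j}\propto\delta_{ij}$, which is automatic on any irreducible HSS. At $k=3$ it yields, after a direct expansion, the much stronger condition that a quadratic contraction of the curvature of the shape $\sum_{a,b} R_{i\bar a k\bar b}\,R_{a\bar j b\bar l}$ be proportional to the complex-space-form tensor $\delta_{ij}\delta_{kl}+\delta_{il}\delta_{kj}$.

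The second step is a case analysis over the irreducible classical Hermitian symmetric spaces: the complex Grassmannians $U(p+q)/U(p)\times U(q)$, the symmetric and skew-symmetric matrix domains $Sp(n)/U(n)$ and $SO(2n)/U(n)$, the complex hyperquadrics, and their non-compact duals. For each type the curvature at $o$ is explicit, coming from the associated Jordan triple product, and one verifies that the quadratic-contraction condition above develops non-``$\delta$-pair'' components as soon as the rank is at least two; for instance, in the toy reducible case $\CP^1\times\CP^1$ with factor-wise holomorphic sectional curvature $c$ one finds $\sum_{a,b}R_{1\bar a 1\bar b}R_{a\bar 1 b\bar 1}=4c^2$ while $\sum_{a,b}R_{1\bar a 1\bar b}R_{a\bar 2 b\bar 2}=0$, inconsistent with any scalar multiple of $\delta_{ij}\delta_{kl}+\delta_{il}\delta_{kj}$ unless $c=0$. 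The reducible cases are disposed of by a similar direct argument showing that cross-derivatives between distinct irreducible factors, absent from the polynomials $p_k(\Delta_c)$, nevertheless appear in $\Delta^k$ starting at $k=2$ whenever at least two factors carry non-flat curvature.

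The main obstacle is carrying out the case-by-case contraction computations uniformly across the four families. I would attempt a uniform route via the generic norm polynomial $N(z,\bar z)$ of the Hermitian Jordan triple system underlying the domain: this polynomial reduces to a function of $|z|^2$ precisely in the rank-one case, and only in that case does Proposition \ref{radial} automatically supply the $\Delta$-property at $o$; outside the rank-one regime the non-radial structure of $N$ forces non-diagonal contractions in some $Q_{k,s}(R)$, yielding a conceptual rather than purely computational obstruction.
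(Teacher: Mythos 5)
Your overall strategy --- reduce to pointwise algebraic conditions on the curvature at a basepoint, extract the Einstein condition at $k=2$ and a stronger condition at $k=3$, then rule out rank $\geq 2$ --- is the same one the paper follows. But there are two genuine gaps and one heavy unfinished step. First, you test the $\Delta$-property directly in K\"ahler normal coordinates and assert the resulting symbol conditions are \emph{equivalent} to the property. They are not, a priori: the property only posits the existence of \emph{some} coordinate system $f$ in which \eqref{affine} holds, and $f$ is shown (via the $k=1,2$ conditions, as in Theorem \ref{ke}) to agree with normal coordinates $\tilde z$ only up to second order after a unitary rotation. The third-order part of the change of coordinates does not vanish in general and contributes nontrivially to $(\Delta_c^f)^2$ and $(\Delta_c^f)^3$ at the origin; the paper's proof of Theorem \ref{hssct} devotes a long displayed computation precisely to tracking these $\de^3\tilde z/\de f^3$ terms and showing they can only produce the relation $\Delta^3(|z_1|^4)(0)=2\frac{\mu_2}{\mu_1}\Delta^3(|z_1z_2|^2)(0)$, which is then contradicted. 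Your plan needs an analogous argument, or a choice of test functions killing those terms. Second, the condition you extract at $k=3$ is misidentified: in the normal-coordinate expansion of $\Delta^3\phi(0)$ (cf.\ \eqref{laplcube}) the coefficient of the fourth-order derivatives of $\phi$ is \emph{linear} in the curvature, namely $\de_{l\bar h}g^{i\bar j}(0)$, and matching it to the symbol of $(\Delta_c)^2$ already forces $R_{i\bar jk\bar l}(o)\propto \delta_{ij}\delta_{kl}+\delta_{il}\delta_{kj}$, i.e.\ constant holomorphic sectional curvature --- which finishes the proof. The four-free-index quadratic contraction $\sum_{a,b}R_{i\bar ak\bar b}R_{a\bar jb\bar l}$ you propose does not occur as the coefficient of any derivative of $\phi$ (quadratic curvature terms only appear contracted down to two free indices, as coefficients of $\de_{j\bar i}\phi$), so the condition you plan to verify case by case is not the one the property actually imposes.

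Third, the case-by-case Jordan-triple computations that you flag as the main obstacle are exactly what the paper avoids: Lemma \ref{APcoord} exhibits, in explicit Alekseevsky--Perelomov coordinates, a holomorphically and isometrically embedded polysphere $\CP^1_r$ in every classical irreducible HSSCT of rank $r$, so a single computation on two test functions ($|z_1|^4$ and $|z_1z_2|^2$, giving $\frac{12\lambda+16}{\mu_1^2}$ versus $\frac{6\lambda}{\mu_1\mu_2}$) disposes of all higher-rank compact cases at once; the noncompact duals are then handled without any new computation via the potential duality $\Phi(z,\bar z)=-\Phi^*(z,-\bar z)$, which flips the sign of $\Delta^3$ on those test functions. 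If you want to complete your route, you should either carry out the uniform polysphere reduction or actually perform the curvature contractions for all four classical families and their duals; as written, the decisive computations are only sketched for the reducible toy case $\CP^1\times\CP^1$.
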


If  Theorem  \ref{mainteor2} could be  extended also to  the case of Hermitian symmetric spaces of   {\em exceptional types} 
then  the complex space forms should be characterized by the $\Delta$-property as expressed by the following:

\begin{guess}
The only complete and simply-connected   \K\ manifolds satisfying  the $\Delta$-property are the complex space forms.
\end{guess}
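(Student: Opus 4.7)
The plan is to combine Theorems \ref{mainteor1} and \ref{mainteor2} with the classification of Hermitian symmetric spaces. Let $(M, g)$ be a complete, simply-connected \K manifold satisfying the $\Delta$-property. By Theorem \ref{mainteor1}, the curvature tensor is parallel, so $(M,g)$ is locally Hermitian symmetric; completeness and simple connectivity then promote this to a global symmetric structure. By the de Rham decomposition $M$ splits isometrically as
$$M = \C^m \times M_1 \times \cdots \times M_r,$$
with each $M_i$ an irreducible Hermitian symmetric space of compact or noncompact type.

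If every irreducible factor $M_i$ is of classical type, then $M$ itself is a Hermitian symmetric space of classical type (its flat part $\C^m$ is automatically a complex space form), and Theorem \ref{mainteor2} concludes that $M$ is a complex space form. The conjecture therefore reduces to the case in which at least one $M_i$ belongs to the family of exceptional irreducible Hermitian symmetric spaces, namely the compact spaces $E_6/(\mathrm{Spin}(10)\cdot \mathrm{U}(1))$ and $E_7/(E_6\cdot \mathrm{U}(1))$ together with their noncompact duals, and the task becomes that of ruling these out.

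A natural way to attack this is to imitate the strategy behind Theorem \ref{mainteor2}. One first observes that the $\Delta$-property descends to each de Rham factor: since $\Delta$ and $\Delta_c^z$ decompose as sums of commuting operators acting on disjoint coordinates, testing the identity $\Delta^k\phi(0)=p_k(\Delta_c^z)\phi(0)$ against functions depending on a single factor yields the $\Delta$-property on each $M_i$ with the \emph{same} universal polynomials $p_k$. One would then expand both $\Delta^k\phi(0)$ and $p_k(\Delta_c^z)\phi(0)$ in Harish-Chandra coordinates at the base point of an exceptional factor and match coefficients of curvature invariants --- scalar curvature, Ricci tensor, norm of the Riemann tensor, and higher contractions --- at successive orders. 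Since the exceptional spaces do not have constant holomorphic sectional curvature, the $\Delta$-property is expected to fail at some finite $k$, leaving only $\C^n$, $\CP^n$ and $\CH^n$.

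This last step is the principal obstacle. The exceptional domains have rank $2$ and rank $3$, and their Jordan triple system description is substantially more intricate than the classical models (grassmannians, quadrics, $\mathrm{Sp}(n,\R)/\mathrm{U}(n)$, $\mathrm{SO}^*(2n)/\mathrm{U}(n)$), so the explicit computation of $\Delta^k$ at the base point is considerably more delicate. A more structural solution would be to refine Theorem \ref{mainteor1} to produce a direct curvature-theoretic obstruction that bypasses the case-by-case classification altogether; this is precisely the gap the authors leave open, and any proof of the conjecture in full generality must resolve it.
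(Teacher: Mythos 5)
The statement you are proving is stated in the paper as a \emph{Conjecture}: the authors explicitly do not prove it, and they say so ("If Theorem \ref{mainteor2} could be extended also to the case of Hermitian symmetric spaces of exceptional types then the complex space forms should be characterized by the $\Delta$-property"). Your proposal reproduces exactly the reduction the authors have in mind --- Theorem \ref{mainteor1} gives parallel curvature, hence a locally symmetric \K\ structure; completeness and simple connectivity globalize it; the de Rham splitting into $\C^m$ times irreducible Hermitian symmetric factors reduces everything to Theorem \ref{mainteor2} when all factors are classical --- and then you stop at precisely the point where the paper stops, namely the exceptional spaces $E_6/(\mathrm{Spin}(10)\cdot \mathrm{U}(1))$, $E_7/(E_6\cdot \mathrm{U}(1))$ and their noncompact duals. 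You are candid that the final step is a plan rather than an argument, so this is an honest reduction, not a proof; the genuine gap is the explicit verification that \eqref{affine} fails at some finite $k$ (presumably already at $k=3$, by the analogue of the computation \eqref{comp1}--\eqref{comp2}) in Harish-Chandra coordinates on the two exceptional models. Until that computation, or a structural curvature obstruction replacing it, is carried out, the statement remains a conjecture.

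Two smaller remarks on your reduction. First, your observation that the $\Delta$-property restricts to each de Rham factor with the \emph{same} polynomials $p_k$ (by testing against functions of the coordinates of a single factor) is correct and is a clean way to isolate an exceptional factor; note however that it does not by itself dispose of reducible products of rank-one factors such as $\CP^1\times\CP^1$ --- each factor separately does satisfy the $\Delta$-property --- which is why the paper's proof of Theorem \ref{hssct} instead tests the mixed monomial $|z_1z_2|^2$ against an embedded $(\CP^1_2,g_{FS}^2)$ and compares \eqref{comp1} with \eqref{comp2}. Any treatment of a product containing an exceptional factor would need the same kind of cross-factor test, not just the restricted property. Second, you should justify that a complete simply-connected \K\ manifold with parallel curvature is \emph{Hermitian} symmetric (i.e.\ that the symmetries can be taken holomorphic); this follows because the complex structure is parallel and hence preserved by the geodesic symmetries of the simply-connected symmetric space, but it deserves a sentence.
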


The following two sections are devoted to the proofs of Proposition \ref{radial}, Theorem \ref{mainteor1} and Theorem \ref{mainteor2}.

\section{Proofs of Proposition \ref{radial} and Theorem \ref{mainteor1}}

\noindent{\it Proof of Proposition \ref{radial}.}  Let $\Phi(|z|^2)$ be a \K potential of a radial \K metric $g$. Then the inverse matrix of $g$ reads locally as
 $$g^{i\bar j}=\frac{1}{\Phi'}\left(\delta_{ij}-\frac{\Phi''}{\Phi'+|z|^2\Phi''}z_j\bar z_i \right),$$
 where $\Phi'$ and $\Phi''$ represent  the first and the second derivative of $\Phi$ with respect to $t=|z|^2$.

We immediately see that \eqref{affine} is verified at point $z=0$ for $k=1$ (up to rescaling $z$). Now, let us assume  by induction that \eqref{affine} holds true for some $k$ and let us prove it for $ k + 1$.
By the inductive assumption one has
$$\Delta^{k+1}\phi(0)=\sum_{l=0}^ka_{k,l}(\Delta_c^z)^l(\Delta \phi)(0)=$$
\begin{equation}\label{recursive}
=\sum_{l=0}^ka_{k,l}(\Delta_c^z)^l\left( \frac{1}{\Phi'}\sum_{i,j}\left(\delta_{ij}-\frac{\Phi''}{\Phi'+|z|^2\Phi''}z_j\bar z_i \right)\frac{\de^2\phi}{\de z_j\de \bar z_i} \right)(0).
\end{equation}
In order to prove that
\begin{equation}\label{recursiveK+1}
\Delta^{k+1}\phi(0) = \sum_{l=0}^{k+1}a_{k+1,l}(\Delta_c^z)^l \phi(0)
\end{equation}
it is enough to show this for $\phi = |z^P |^2$ and $\phi = z^P \bar z^Q$ with $ P \neq Q$, where we are denoting $ z^P = z_1^{P_1}\dotsb z_n^{P_n}$  for  $P = (P_1,\mathellipsis, P_n) \in \N^n$, since every smooth $\phi$ decomposes as a series of such monomials.

Let us first consider the case $\phi=z^P\bar z^Q$, $P\neq Q$. On the one hand, one has $(\Delta_c^z)^l\phi(0)=0$; on the other hand, by using  \eqref{recursive}, it is easy to see that $\Delta^{k+1}\phi(0)=0$, and then (\ref{recursiveK+1}) is trivially true for any choice of the coefficients $a_{k+1,l}$.

 Now, consider $\phi = |z^P|^2$ and let $|P|:=  P_1 + \mathellipsis + P_n=p$. In this case, one has that (\ref{recursiveK+1}) is true if and only if $\Delta^{k+1}\phi(0) = p! P! a_{k+1,p}$. By using \eqref{recursive}, one has
\begin{equation*}
\Delta^{k+1}\phi(0)=\sum_{l=0}^ka_{k,l}(\Delta_c^z)^l\left(\sum_{i} \frac{P_i^2|z^{P-e_i}|^2}{\Phi'}-\sum_{i,j}\frac{\Phi''P_iP_j|z^P|^2}{\Phi'(\Phi'+|z|^2\Phi'')} \right)\Big|_0,
\end{equation*}
where we are denoting by $e_i$ the vectors of the canonical basis of $\R^n$.
By taking into account  Leibniz's rule for the derivatives of a product of functions, we can state that for any $P\in \N^n$  there exists a constant $C_{p,l}^\psi$ depending only on $p$, $l$ and the radial smooth function $\psi$ such that 
$$(\Delta^z_c)^l\Big(|z^P|^2\psi(|z|^2)\Big)\Big|_0 =C_{p,l}^\psi (\Delta^z_c)^p|z^P|^2\Big|_0=C_{p,l}^\psi  p!P!.$$
Furthermore,  if $p>l$, then  $C_{p,l}^\psi=0$ independently of $\psi$ and $C_{h,h}^{\Phi'^{-1}}=1$ for every positive  integer $h$ because we rescaled local coordinates $z$ so that $\Phi'(0)=1$.
After a straightforward computation, we get the following relation which determines the polynomial $p_{k+1}$:
$$a_{k+1,p} = a_{k,p-1}+\sum_{l=p}^k a_{k,l} \left( C_{p-1,l}^{\Phi'^{-1}}-p^2 C_{p,l}^{\frac{\Phi''}{\Phi'(\Phi'+|z|^2\Phi'')}} \right).$$
\hfill $\Box$

The aim of the rest of the section is to prove Theorem \ref{mainteor1}, i.e. that if a \K manifold satisfies the $\Delta$-property then the covariant derivatives of its Riemann tensor $R$ vanish identically. 

We begin by showing the following result (which will be used in the proofs of both Theorem \ref{mainteor1} and Theorem \ref{mainteor2}).

\begin{theor}\label{ke}
A \K  manifold $(M,g)$ is Einstein if and only if for each point $x \in M$ there exist local coordinates $z$ centered at $x$ such that (\ref{affine}) is satisfied for $k=1,2$.
\end{theor}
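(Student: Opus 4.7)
The plan is to test the $k=1$ and $k=2$ versions of \eqref{affine} against low-degree monomials in $z,\bar z$ and extract algebraic constraints on the metric at the base point. Applying \eqref{affine} to the constant function $\phi\equiv 1$ immediately shows $p_1(t)=t$ and $p_2(t)=t^2+\lambda t$ for some $\lambda\in\R$.

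For the forward direction, I first apply the $k=1$ identity to $\phi=z_a\bar z_b$: this gives $g^{a\bar b}(0)=\delta_{ab}$, equivalently $g_{i\bar j}(0)=\delta_{ij}$. Then I apply the $k=2$ identity to $\phi=z_az_b\bar z_c$. Here the right hand side vanishes while the left hand side, using $g(0)=I$, reduces to $\partial_{\bar z_b}g^{c\bar a}(0)+\partial_{\bar z_a}g^{c\bar b}(0)$. Differentiating $g\cdot g^{-1}=I$ once at the origin translates the resulting antisymmetry into a condition on $\partial_{\bar z}g$; combined with the K\"ahler symmetry $\partial_{\bar z_l}g_{k\bar i}=\partial_{\bar z_i}g_{k\bar l}$, a short permutation argument (the only rank-three tensor simultaneously symmetric in two positions and antisymmetric in a third is the zero tensor) forces every first derivative of $g$ to vanish at $0$, so the given coordinates are in fact K\"ahler normal to first order.

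Once this is established, I apply the $k=2$ identity to $\phi=z_a\bar z_b$. Expanding $\Delta^2$ at the origin, the cross terms drop out since $\partial g(0)=0$, and the leading contribution reduces, via the standard identity $\partial_{z_l}\partial_{\bar z_k}g^{i\bar j}(0)=R_{j\bar i l\bar k}(0)$ obtained by differentiating $g\cdot g^{-1}=I$ a second time, to a trace of the curvature tensor. The $k=2$ condition then becomes $\Ric_{a\bar b}(0)=\lambda\delta_{ab}$, which is the K\"ahler-Einstein condition at $x$. Since $x$ is arbitrary, $(M,g)$ is Einstein.

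For the converse, at each point $x$ of a K\"ahler-Einstein manifold I choose K\"ahler normal coordinates, so that $g(0)=I$, $\partial g(0)=0$, and the identity above holds. The $k=1$ case of \eqref{affine} is immediate with $p_1(t)=t$. Expanding $\Delta^2\phi(0)$, the vanishing of first derivatives makes all cross terms drop out, leaving $\sum_{i,j}\Ric_{j\bar i}(0)\partial_{z_j}\partial_{\bar z_i}\phi(0)+(\Delta_c^z)^2\phi(0)$; substituting $\Ric=\lambda g$ yields the required form with $p_2(t)=t^2+\lambda t$. The main obstacle is the combinatorial argument in the second paragraph, where one must combine the antisymmetry extracted from $\phi=z_az_b\bar z_c$ with the K\"ahler symmetry of $g^{-1}$ in just the right way to conclude that the given coordinates are automatically K\"ahler normal to first order; the remaining steps are then direct K\"ahler normal coordinate computations.
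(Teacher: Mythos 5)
Your proposal is correct and follows the same overall strategy as the paper's proof: use the $k=1$ identity to get $g^{a\bar b}(0)=\delta_{ab}$, extract the antisymmetry relation $\partial_{\bar z_b}g^{c\bar a}(0)+\partial_{\bar z_a}g^{c\bar b}(0)=0$ from the third-order coefficients of the $k=2$ identity, and identify the fourth-order (trace) condition $\sum_h\partial_{h\bar h}g^{i\bar j}(0)=\lambda\delta^{ij}$ with the Einstein equation; the converse is the same holomorphic-normal-coordinate computation as \eqref{laplquad}. The one point where you genuinely diverge is in handling the fact that the hypothesized coordinates need not be normal. The paper introduces an auxiliary normal system $z$, uses $k=1$ to show the Jacobian of the transition map is unitary at the origin and the antisymmetry relation to show its second derivatives vanish there, and then transports the trace condition back to $z$. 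You instead stay in the given coordinates: rewriting the antisymmetry via $g\cdot g^{-1}=I$ as $\partial_{\bar z_b}g_{a\bar c}(0)+\partial_{\bar z_a}g_{b\bar c}(0)=0$ and combining it with the \K symmetry $\partial_{\bar z_b}g_{a\bar c}=\partial_{\bar z_c}g_{a\bar b}$, the standard ``symmetric in one pair, antisymmetric in an overlapping pair'' argument kills all first derivatives of $g$ at the origin, so the given coordinates are already normal to first order and the trace condition reads directly as $\Ric_{a\bar b}(0)=\lambda g_{a\bar b}(0)$. This is a clean and slightly more economical route; the paper's version has the side benefit that the unitarity of the Jacobian and the vanishing of the second derivatives of the transition map are reused verbatim in the proofs of Theorems \ref{mainteor1} and \ref{hssct}. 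One small caveat, shared with the paper's own argument: the $k=1,2$ hypothesis a priori only yields $\Ric=\lambda(x)\,g$ pointwise, so one should invoke Schur's lemma (for $\dim_{\C}M\geq 2$) or the $x$-independence of $p_2$ to get a genuine Einstein constant.
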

\begin{proof}
Let $x \in M$ and $z$ be a holomorphic normal coordinate system on $M$ centered at $x$. Clearly, $\Delta \phi(0) = \Delta_c^{ z} \phi(0)$ and (\ref{affine}) is satisfied for $k=1$. Now, if $(M,g)$ is a Einstein manifold, in local coordinates we have\footnote{We are going to use  the notation $\partial_i$ to denote $\frac{\partial}{\partial z_i}$ and a similar notation for higher order derivatives. We are also going to use Einstein's summation convention for repeated indices.}
 \begin{equation}\label{einstein}
 \lambda  g_{i\bar j}=\textrm{Ric}_{i\bar j}= g^{k \bar h}\left( - \partial_{ k \bar h}  g_{i\bar j}+ g^{p \bar q}\partial_k   g_{i\bar q}\partial_{\bar h}   g_{p\bar j}\right).
 \end{equation}
Hence, if we evaluate the previous equation at $x$, we get
\begin{equation}\label{sumder2}
 \sum_{h}\partial_{h\bar h}  g^{i \bar j}(0)=\lambda \delta^{i j}.  
 \end{equation}
By \eqref{sumder2}, we get
 \begin{equation}\label{laplquad}
 \Delta^2  \phi(0)=   g^{h\bar k} \partial_{k\bar h}\big(  g^{i\bar j} \partial_{j \bar i}\phi \big) \Big|_0=\Big((\Delta_c^{ z})^2+\lambda \Delta_c^{ z}\Big)\phi(0)
 \end{equation}
that is (\ref{affine}) is satisfied also for $k=2$.

 Conversely, let us now suppose that for each $x \in M$ there exists a local coordinate system $w$ with respect to which \eqref{affine} is fulfilled for $k=1,2$.  By comparing in both sides of \eqref{affine} for $k=2$ the third order derivative's  coefficient, we get 
 $$\frac{\de  g^{i\bar j}_w}{\de w_k}(0)+\frac{\de  g^{k\bar j}_w}{\de w_i}(0)=0$$
 for every index $i$, $j$ and $k$, where we denote by $g^{i\bar j}_w$ the $(i,\bar j)$ entry of the inverse matrix of $g\left(\frac{\de}{\de w_\alpha},\frac{\de}{\de \bar w_\beta}\right)$. Let $z$ be a holomorphic normal coordinate system around the same point of $w$. Therefore, the previous equation implies 
 $$\frac{\de z_\gamma}{\de w_k}\frac{\de}{\de z_\gamma}\left(\overline{\frac{\de w_i }{\de z_\alpha}} g^{\alpha\bar \beta}_z \frac{\de w_j}{\de z_\beta} \right)\Big|_0+\frac{\de z_\gamma}{\de w_i}\frac{\de}{\de z_\gamma}\left(\overline{\frac{\de w_k }{\de z_\alpha}} g^{\alpha\bar \beta}_z \frac{\de w_j}{\de z_\beta} \right)\Big|_0=0.$$
Since \eqref{affine} for $k=1$ reads as
$$\Delta_c^z\phi(0)=\Delta\phi(0)= \Delta_c^{ w}\phi(0)= \sum_{i,j,\alpha,\beta}\frac{\de z_\alpha}{\de  w_i}\overline{\frac{\de  z_\beta}{\de {  w}_i}}\frac{\de^2\phi}{\de z_\alpha\de\bar {z}_\beta}\Big|_0,$$
 $\frac{\de z_\alpha}{\de  w_i}\Big|_0$ needs to be a unitary matrix.
Hence, we get
$$ \frac{\de^2 w_j}{\de z_\alpha\de z_\beta}(0)  =0$$
 for every index $j$, $\alpha$ and $\beta$.
 By considering that  local coordinates $w$ satisfy \eqref{laplquad}, hence they need to satisfy \eqref{sumder2}, we have
 $$\lambda \delta^{i j}= \sum_{h}\frac{\de^2g^{i \bar j}_w}{\de w_h\de\bar w_ h}  \Big|_0= \sum_{h}\frac{\de z_\gamma}{\de w_h}\frac{\de w_j}{\de  z_\beta}\frac{\de^2 g^{\alpha\bar \beta}_z}{\de z_\gamma \de\bar z_\delta} \overline{\frac{\de w_i}{\de z_\alpha}}\ \overline{\frac{\de z_\delta}{\de w_h}}\Big|_0=\sum_h \frac{\de^2 g^{i\bar j}_z}{\de z_h \de z_h}\Big|_0.$$
This means that $\textrm{Ric}_{i\bar j} = \lambda g_{i\bar j}$ at $x$, and then by the arbitrariness of $x$ we conclude that $g$ is a \KE metric. The theorem is proved.
\end{proof}

\begin{rem}
\rm Notice that, by combining Theorem \ref{ke} with the uniformization theorem, one gets a proof of our conjecture for complex dimension $n=1$.
\end{rem}

Now we are finally ready to prove Theorem \ref{mainteor1}.

\vspace{0.4cm}

\noindent{\it Proof of Theorem \ref{mainteor1}. } For any $x \in M$, let $w$ be  a local coordinate system centered at $x$ with respect to which (\ref{affine}) is satisfied for any positive integer $k$. In particular, by comparing the fifth order derivative's  coefficients of both sides of (\ref{affine}) for $k=4$, we get that
 \begin{equation}\label{5der}
 \left(\de_{h\bar k l}  g^{i\bar j}_w+\de_{i\bar k l}  g^{h\bar j}_w+\de_{i\bar k h}  g^{l\bar j}_w+\de_{h\bar j l}  g^{i\bar k}_w+\de_{i\bar j l}  g^{h\bar k}_w+\de_{i\bar j h}  g^{l\bar k}_w\right)(0)=0
 \end{equation}
 for every choice of indexes $i$, $j$, $h$, $k$ and $l$.

 Let $z$ be a holomorphic normal coordinate system around the same point of $w$.  By taking into account that in the proof of Theorem \ref{ke} we have shown that every second order derivative of the holomorphic change of coordinates sending $z$ to $w$ vanishes at $z=0$ and  $\frac{\de z_\beta}{\de w_j}\Big|_0$ is a unitary matrix, 
 we get 
 $$\sum_{\alpha,\beta}\frac{\de^3 g^{\alpha \bar \beta}_z}{\de z_\gamma\de\bar z_\delta\de z_\epsilon} \Big(\overline{\frac{\de z_\beta}{\de w_j}}\ \overline{ \frac{\de z_\delta}{\de w_k}}+\overline{\frac{\de z_\beta}{\de w_k}}\ \overline{ \frac{\de z_\delta}{\de w_j}} \Big)\Big( \frac{\de z_\alpha}{\de w_i}\frac{\de z_\gamma}{\de w_h}\frac{\de z_\epsilon}{\de w_l}+$$
 $$+\frac{\de z_\alpha}{\de w_h}\frac{\de z_\gamma}{\de w_i}\frac{\de z_\epsilon}{\de w_l}+\frac{\de z_\alpha}{\de w_l}\frac{\de z_\gamma}{\de w_i}\frac{\de z_\epsilon}{\de w_h} \Big)\Big|_0=0. $$
Therefore,  for every index $\alpha$, $\beta$, $\gamma$, $\delta$ and $\epsilon$, a relation similar to \eqref{5der} holds true also with respect to holomorphic normal coordinates $z$:
$$ \left(\de_{\gamma\bar \delta \epsilon}  g^{\alpha\bar \beta}_z+\de_{\alpha\bar \delta \epsilon}  g^{\gamma\bar \beta}_z+\de_{\alpha\bar \delta \gamma}  g^{l\bar \beta}_z+\de_{\gamma\bar \beta \epsilon}  g^{\alpha\bar \delta}_z+\de_{\alpha\bar \beta \epsilon}  g^{\gamma\bar \delta}_z+\de_{\alpha\bar\beta \gamma}  g^{\epsilon\bar \delta}_z\right)(0)=0.$$
Since 
$$\frac{\de^3  g^{\alpha\bar \beta}}{\de z_\gamma \de\bar z_\delta \de z_\epsilon}(0)=-\frac{ \de^3 g_{\alpha\bar \beta}}{\de z_\gamma\de\bar z_\delta \de z_\epsilon}(0),$$
we get 
$$\frac{\de^3  g_{\alpha\bar \beta}}{\de z_\gamma \de\bar z_\delta \de z_\epsilon}(0)=0$$
for every index $\alpha$, $\beta$, $\gamma$, $\delta$ and $\epsilon$. It follows that the  covariant derivatives of the Riemann tensor must vanish identically as desired. \hfill $\Box$

\begin{rem}
\rm Notice that in fact we have proved the stronger statement that if for any $x \in M$ there exists a local coordinate system centered at $x$ with respect to which (\ref{affine}) is satisfied for $k=1,\dots,4$, then curvature tensor of $M$ is parallel.
\end{rem}

\section{Proof of Theorem \ref{mainteor2}}

The first step in the proof of our second main result consists in characterizing  complex projective spaces among  Hermitian symmetric spaces of compact type\footnote{From now on HSSCT.} by means of the relations between \K and complex Euclidean Laplacians \eqref{affine}. Before proving such theorem, we need  a technical lemma.

We have  to recall that Alekseevsky and Perelomov described explicitly in \cite{AlPer}  holomorphic coordinates  for every flag manifold $G/K$ which turns out to be an orbit of the adjoint action of a classical compact semisimple Lie group $G$ on its Lie algebra $\mathfrak{g}$.
Throughout the paper we are going to call these coordinates \emph{Alekseevsky-Perelomov coordinates}.
Moreover, we will write $\CP^1_r$ to denote the product of $r$ complex projective spaces $\CP^1$  equipped with  the product metric $g^r_{FS}=g_{FS}\oplus \mathellipsis\oplus g_{FS}$.

\begin{lem}\label{APcoord}
Let  $M$ be an irreducible classical $n$-dimensional  HSSCT of rank $r$ endowed the (unique up to rescaling) \KE metric.
 Then there exists a local coordinate system $w$ such that the \K immersion's equations of  $\big(\CP^1_r,  g^r_{FS}\big)$ into $M$ read  as 
 $$\begin{cases}
 w_i=z_i &\text{for }i=1,\mathellipsis,r\\
 w_i=0 &\text{for }i=r+1,\mathellipsis,n
 \end{cases}$$
 where  $z$ are  affine  coordinates on  $\CP_r^1$.
\end{lem}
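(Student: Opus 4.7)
The plan is to work case-by-case through the four families of irreducible classical Hermitian symmetric spaces of compact type, namely the complex Grassmannians $G(p,q)$, the complex quadrics $Q_n$, the Lagrangian Grassmannians $Sp(n)/U(n)$, and the orthogonal Grassmannians $SO(2n)/U(n)$. For each of these, I would write down explicitly the Alekseevsky--Perelomov coordinates from \cite{AlPer}, which parametrize a dense open neighborhood of the base point $eK$ via the exponential map from $\mathfrak{p}^-$ in the Harish-Chandra decomposition $\mathfrak{g}^{\mathbb{C}}=\mathfrak{k}^{\mathbb{C}}\oplus \mathfrak{p}^+\oplus \mathfrak{p}^-$.

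Next I would invoke the polysphere theorem: associated to any Hermitian symmetric space $M=G/K$ of rank $r$ there is a set $\{\gamma_1,\dots,\gamma_r\}$ of strongly orthogonal non-compact positive roots (the Harish-Chandra roots), and the $r$ mutually commuting $\mathfrak{sl}_2$-triples they generate integrate to a totally geodesic \K embedding of $(\CP^1)^r$ into $M$. With an appropriate normalization of the unique (up to rescaling) \KE metric on $M$, this embedding is a \K isometry with respect to the product Fubini--Study metric $g^r_{FS}$ on $\CP^1_r$. The core idea is then to choose the basis of $\mathfrak{p}^-$ that defines the Alekseevsky--Perelomov coordinates $(w_1,\dots,w_n)$ so that the first $r$ basis vectors are exactly the root vectors $e_{-\gamma_1},\dots,e_{-\gamma_r}$; with this choice, the polysphere consists precisely of those points where the remaining coordinates $w_{r+1},\dots,w_n$ vanish, and along the polysphere the coordinates $(w_1,\dots,w_r)$ restrict to the affine coordinates on each $\CP^1$ factor.

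Concretely, one would perform the verification family by family. For the Grassmannian $G(p,q)$, the Alekseevsky--Perelomov coordinates are the entries $Z_{ij}$ of a $p\times q$ matrix, and the canonical polysphere is the diagonal subtorus $Z=\operatorname{diag}(z_1,\dots,z_r)$ with $r=\min(p,q)$, so one orders the coordinates so that $w_1,\dots,w_r$ correspond to the diagonal entries and the other $w_j$ to off-diagonal entries. For the quadric $Q_n$ the rank is $2$ and the polysphere sits inside a distinguished $\CP^1\times\CP^1$ slice cut out by a quadratic equation which linearizes in suitable coordinates. For $Sp(n)/U(n)$ and $SO(2n)/U(n)$ the coordinates are the entries of a symmetric (resp.\ skew-symmetric) $n\times n$ matrix, and again the polysphere is the diagonal (resp.\ block-diagonal) subspace.

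The main obstacle I expect is the bookkeeping needed to match the normalizations: one must check that the restriction of the (uniquely normalized) \KE metric on $M$ to the polysphere really equals $g^r_{FS}$ and not some rescaled version, and one must verify that the affine parameters on each $\CP^1$ inherited from the Alekseevsky--Perelomov coordinates agree with the standard affine coordinates after a possible linear rescaling (which is harmless because we are only asked to produce \emph{some} coordinate system $w$ in which the immersion takes the stated form, and the lemma tacitly allows the $z_i$ to be affine coordinates up to biholomorphism of each factor). Once the normalization is fixed in each of the four cases, the description of the embedding reduces to the algebraic identification of the polysphere as the vanishing locus of $n-r$ of the Alekseevsky--Perelomov coordinates.
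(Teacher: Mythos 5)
Your proposal is correct and follows essentially the same route as the paper: a case-by-case analysis of the classical families in which the polysphere $\CP^1_r$ is identified with the diagonal (or block-diagonal, or a distinguished slice for the quadrics) locus in Alekseevsky--Perelomov coordinates, followed by a check that the restricted \KE potential is the product Fubini--Study potential. The paper simply writes down the explicit potentials and immersions and verifies directly, leaving implicit the strongly-orthogonal-roots/polysphere picture that you use as motivation, so the two arguments coincide in substance.
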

\begin{proof}
We are going to show explicitly in the following case by case analysis  how the isometric embedding of  $\big(\CP_r^1, g_{FS}^r\big)$ into an irreducible classical HSSCT of rank $r$ reads with respect to  Alekseevsky-Perelomov coordinates. The desired local coordinate system $w$ is obtained in a obvious way from the Alekseevsky-Perelomov one.

\bigskip
\noindent
\textbf{Case 1}: ${SU(N)}/{S\big(U(k) \times U(N-k)\big)}$.\\
The Alekseevsky-Perelomov coordinates are given by the entries $w_{ij}$ of a complex $(N-k) \times k$ matrix $W$ and the potential of the \KE metric (up to a constant) with respect to these coordinates reads as
\begin{equation*}\label{potentialGrass}
 \log \det\left( I_k + {}^T\bar W W \right).
\end{equation*}
We can easily prove that a \K  immersion of $\bC P^1_m$, where $m=\min\{ k, N-k \}$, is given by sending affine coordinates $z = (z_1, \dots, z_m)$ to the matrix $W(z) \in M_{N-k, k}(\bC)$ defined by
$$W(z)_{ij} = z_i \delta_{ij}.$$
 
\bigskip
\noindent
\textbf{Case 2}: ${SO(2N)}/{U(N)}$.\\
The Alekseevsky-Perelomov coordinates are given by the entries $w_{ij}$ of a skew-symmetric complex $N \times N$ matrix $W$ and the potential of the \KE metric (up to a constant) with respect to these coordinates reads as
\begin{equation*}\label{potentialSONpariU}
 \log \det\left( I_N + {}^T\bar W W\right).
\end{equation*}
We can easily prove that  a \K immersion of $\bC P^1_{ \left[ \frac{N}{2}\right]}$  is given by sending affine coordinates $z = (z_1, \dots, z_{\left[ \frac{N}{2}\right]})$ to the matrix
\begin{small}
$$W(z)= \left( \,\begin{array}{cccccccc}
0 &   z_1     & 0 & 0   &\cdots \\
-z_1  & 0 & 0  & 0   & \cdots\\
 0  &     0   & 0 & z_2   &  \\
0 & 0 & -z_2 & 0   &\ddots  \\

 \vdots & \vdots  &  &\ddots &  \ddots   \\
\end{array}\,\right).$$
\end{small}

\bigskip
\noindent
\textbf{Case 3}: ${Sp(N)}/{U(N)}$.\\
The Alekseevsky-Perelomov coordinates are given by the entries $w_{ij}$ of a symmetric $N \times N$ complex matrix $W$ and the potential the \KE metric (up to a constant) in these coordinates reads as
\begin{equation*}\label{potentialSPN}
 \log \det\left( I_N + {}^T\bar W W\right).
\end{equation*}
We can easily prove that  a \K immersion of $\bC P^1_N$  is given by sending affine coordinates $z = (z_1, \dots, z_{N})$ to the matrix
$$W(z)_{ij}=z_i\delta_{ij}.$$ 

\bigskip
\noindent
\textbf{Case 4a}: ${SO(2N)}/\big({SO(2N-2) \times SO(2)}\big)$, with $N \geq 4$.\\
The Alekseevsky-Perelomov coordinates are $(v_2, \dots, v_N, v_2', \dots, v_N') \in \bC^{2N-2}$ and the potential the \KE metric (up to a constant) in these coordinates reads as
\begin{equation*}\label{potentialSONpari}
 \log\left(1 + \sum_{j=2}^N |v_j|^2 + \sum_{j=2}^N |v'_j|^2  + 4 \left|  \sum_{j=2}^N v_j v'_j\right|^2 \right).
\end{equation*}
We can easily prove that  a \K immersion of $\bC P^1 \times \bC P^1$ is given by setting
$$v_2 = \frac{z_1}{\sqrt2}, v_3 = \frac{z_2}{\sqrt2}, v_4 = \mathellipsis = v_N=0,$$
$$v'_2 = 0, v'_3 = \frac{z_1}{\sqrt2}, v'_4 = \frac{z_2}{\sqrt2}, v'_5 =\mathellipsis = v'_N=0,$$
where $(z_1,z_2)$ are affine coordinates on $\bC P^1 \times \bC P^1$.

\bigskip
\noindent
\textbf{Case 4b}: ${SO(2N+1)}/\big({SO(2N-1) \times SO(2)}\big)$, with $N \geq 4$.\\
The Alekseevsky-Perelomov coordinates are $(v_2, \dots, v_N, v'_2, \dots, v'_N, u) \in \bC^{2N-1}$. The potential the \KE metric (up to a constant) in these coordinates reads as
\begin{equation*}\label{potentialSONdispari}
 \log\left(1 + \sum_{j=2}^N |v_j|^2 + \sum_{j=2}^N |v'_j|^2 + |u|^2  + 4 \left|  \sum_{j=2}^N v_j v'_j\ - u^2 \right|^2 \right)
\end{equation*}
and a \K  immersion of $\bC P^1 \times \bC P^1$ is given by setting
$$v_2 = \frac{z_1}{\sqrt2}, v_3 = \frac{z_2}{\sqrt2}, v_4 =\mathellipsis= v_N=0,$$ 
$$v'_2 = 0, v'_3 = \frac{z_1}{\sqrt2}, v'_4 = \frac{z_2}{\sqrt2}, v'_5 =\mathellipsis= v'_N=u=0,$$
where $(z_1,z_2)$ are affine coordinates on $\bC P^1 \times \bC P^1$ .
\end{proof}

We are now in a  position to characterize complex projective spaces among irreducible HSSCT.

\begin{theor}\label{hssct}
Complex projective spaces are the unique classical irreducible HSSCT which satisfy the $\Delta$-property.
\end{theor}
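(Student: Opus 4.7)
The plan is to argue by contradiction. Suppose $M$ is a classical irreducible HSSCT of rank $r\ge 2$ satisfying the $\Delta$-property; the goal is to derive an inconsistency, thereby forcing $r=1$ and hence $M\cong\CP^n$. To begin, I fix the Alekseevsky--Perelomov coordinates $w=(w_1,\dots,w_n)$ on $M$ supplied by Lemma \ref{APcoord}, in which the totally geodesic \K embedding $(\CP^1_r,g^r_{FS})\hookrightarrow M$ reads $w_i=z_i$ for $i\le r$ and $w_j=0$ for $j>r$. An inspection of the four potentials listed in the lemma (all of the form $\log(1+P)$ with $P$ of order $\ge 2$ in $(w,\bar w)$ and with no cubic terms) shows that $g^M_{i\bar j}(0)=\delta_{ij}$ after a harmless rescaling, and that the first derivatives of $g^M_{i\bar j}$ vanish at the origin; in particular the AP coordinates are holomorphic normal coordinates at $0$.

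The core of the argument is the following rigidity. Consider the two polynomial test functions
\[
\phi_1(w)=|w_1|^4, \qquad \phi_2(w)=|w_1|^2|w_2|^2,
\]
which are well-defined because $r\ge 2$ supplies at least two tangential coordinates. Since $(\Delta_c^w)^l\phi_1(0)=4\,\delta_{l,2}$ and $(\Delta_c^w)^l\phi_2(0)=2\,\delta_{l,2}$, the $\Delta$-property of $M$ forces, for every $k\ge 2$,
\[
\Delta_M^k\phi_1(0)=4\,a_{k,2}^M \qquad\text{and}\qquad \Delta_M^k\phi_2(0)=2\,a_{k,2}^M,
\]
where $a_{k,2}^M$ is the coefficient of $\Delta_c^2$ in $p_k^M$. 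In particular the $\Delta$-property imposes the rigid \emph{ratio identity}
\[
\Delta_M^k\phi_1(0)=2\,\Delta_M^k\phi_2(0), \qquad k\ge 2.
\]
By Theorem \ref{ke} this identity holds automatically for $k=2$. The plan is to show that it fails for some $k$ in each of the four classical families.

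To verify the failure, I would compute both sides of the ratio identity directly from the AP potential of $M$. Using $g^M_{i\bar j}(0)=\delta_{ij}$, the vanishing of the first derivatives of $g^M_{i\bar j}$ at the origin, the Einstein relation shown in the proof of Theorem \ref{ke}, and iterated Leibniz expansions, $\Delta_M^k\phi_s(0)$ becomes a combination of low-order Taylor coefficients of $g^{i\bar j}_M$ at the origin. Beyond the quadratic Einstein contribution, the relevant coefficients are generated by the \emph{quartic cross-term} in the AP potential: for the Grassmannian $SU(N)/S(U(k)\times U(N-k))$ this is the subdeterminantal term inside $\log\det(I+W^\ast W)$, whose rank-$2$ prototype is $|w_1w_2-w_3w_4|^2$; for the skew-symmetric and symmetric cases it is the analogous sum of $2\times 2$ minors of ${}^T\bar W W$; and for the hyperquadric cases it is the quartic $\bigl|\sum v_jv'_j\bigr|^2$ (or $\bigl|\sum v_jv'_j-u^2\bigr|^2$) appearing in the potential of Lemma \ref{APcoord}. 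In each family this quartic term couples $w_1$ and $w_2$ in a way that contributes asymmetrically to $\Delta_M^k\phi_1(0)$ and $\Delta_M^k\phi_2(0)$, thereby breaking the ratio identity. For contrast, applied to $\CP^n$ (the radial case of Proposition \ref{radial}, where no such quartic cross-term is present) the same recursion satisfies the ratio identity identically, as it must.

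The main obstacle is the case-by-case verification just outlined: one must track how the quartic correction to the AP potential propagates through iterated applications of $\Delta_M$ and pin down the smallest $k$ (expected to be at most $4$) at which the imbalance becomes visible. The bookkeeping is cleanest for the Grassmannian $SU(4)/S(U(2)\times U(2))$, where the whole computation reduces to expanding the single potential $\log\bigl(1+|w_1|^2+|w_2|^2+|w_3|^2+|w_4|^2+|w_1w_2-w_3w_4|^2\bigr)$ to order four at the origin and applying the Leibniz recursion; the remaining three families then follow by the same type of calculation, using the respective quartic cross-terms listed above.
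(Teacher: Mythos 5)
Your strategy coincides with the one in the paper: embed $(\CP^1_r,g^r_{FS})$ into $M$ via Lemma \ref{APcoord}, test the $\Delta$-property against the two quartic functions $|z_1|^4$ and $|z_1z_2|^2$, and derive a contradiction from the resulting ratio identity when $r\geq 2$. As written, however, the proposal has two genuine gaps. First, the $\Delta$-property only guarantees \eqref{affine} in \emph{some} coordinate system $f$ centered at the point, not in the Alekseevsky--Perelomov coordinates $w$; to pass from $\Delta^k\phi_s(0)=p_k(\Delta_c^f)\phi_s(0)$ to your ratio identity you must show that $(\Delta_c^f)^l\phi_s(0)=(\Delta_c^w)^l\phi_s(0)$ for these test functions. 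The paper handles this by showing (using Theorem \ref{ke}) that the change of coordinates from $f$ to normal coordinates has unitary linear part and vanishing second-order derivatives at the origin, and that the residual third-derivative corrections multiply only $\partial^2\phi/\partial z_\alpha\partial\bar z_\beta(0)$ and the ``unbalanced'' fourth derivatives $\partial^4\phi/\partial\bar z_{\alpha_1}\partial\bar z_{\alpha_2}\partial\bar z_{\alpha_3}\partial z_{\alpha_4}(0)$, all of which vanish for $|z_1|^4$ and $|z_1z_2|^2$. This step is missing from your argument.

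Second, and more seriously, the decisive computation --- the one that actually produces the contradiction --- is precisely what you defer as ``the main obstacle,'' so the contradiction is conjectured rather than established. The paper carries it out at $k=3$: writing $\Delta^3$ in normal coordinates (formula \eqref{laplcube}) and normalizing so that $g(0)$ is the identity, one finds $\Delta^3(|z_1|^4)(0)=12\lambda+16$ while $2\,\Delta^3(|z_1z_2|^2)(0)=12\lambda$, whence $16=0$. Note also that no case-by-case order-four expansion of the four ambient potentials is needed at this stage: since the immersion of Lemma \ref{APcoord} is K\"ahler, the restriction of the ambient potential to the polysphere is $\sum_i\log(1+|z_i|^2)$, so the only Taylor coefficients entering the computation (namely $\partial^2 g^{1\bar 1}/\partial w_1\partial\bar w_1(0)=2$ and the vanishing of the mixed ones such as $\partial^2 g^{1\bar 1}/\partial w_2\partial\bar w_2(0)$) are universal, and the entire case analysis is already contained in Lemma \ref{APcoord}. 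Your heuristic that the quartic cross-term of the ambient potential is responsible is consistent with this picture --- it is exactly what cancels the coefficient of $|w_1|^2|w_2|^2$ that is present for $\CP^n$ and would otherwise preserve the ratio --- but until the $k=3$ computation is performed the argument remains a plan rather than a proof.
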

\begin{proof}
Let $M$ be an $n$-dimensional HSSCT endowed with the \KE metric $ g$.  We denote by $\lambda$ the Einstein constant. Let $\tilde z$ be a holomorphic normal coordinate system. 

By combining \eqref{einstein}, \eqref{sumder2} and  \eqref{laplquad} above, we get that  every smooth function $\phi$ defined in a neighborhood $V$ of the center of $\tilde z$ fulfills the following
$$\Delta^3  \phi(0) =\Big((\Delta_c^{\tilde z})^3 +3\lambda(\Delta_c^{\tilde z})^2+\lambda^2\Delta_c^{\tilde z}\Big) \phi(0)+2 \sum_{ l,h=1}^{n} \partial_{l\bar h} g^{i\bar j}\partial_{j  h \bar l\bar i}\phi\ \Big|_0+$$
\begin{equation}\label{laplcube}
 +\sum_{ l,h=1}^{n} \partial_{lh} g^{i\bar j}\partial_{j \bar h \bar l\bar i}\phi\ \Big|_0+\sum_{ l,h=1}^{n} \partial_{\bar l\bar h}  g^{i\bar j}\partial_{j  h  l\bar i}\phi\ \Big|_0+\sum_{l,h=1}^{n}\partial_{l h\bar l\bar h} g^{i \bar j} \partial_{j\bar i}\phi\ \Big|_0,
 \end{equation}
 where we use that by differentiating  \eqref{einstein} and evaluating in the center of $\tilde z$, the coefficients of the  third order derivatives of $\phi$ vanish.

If $z$ are  affine coordinates  on  $(\CP_r^1, g_{FS}^r)$, where $r$ is equal to the rank of $M$, by taking into account Lemma \ref{APcoord} and by considering that  Alekseevsky-Perelomov coordinates $w$ are  normal  up to rescaling by suitable  constants that we call $\mu_i$  (see \cite{bochnerflag} Theor. 1), we can  compute
\begin{equation}\label{comp1}
\Delta^3  \big(|z_1|^4\big)\Big|_0=\frac{3\lambda}{(\mu_1)^2}\big(\Delta_c^{w}\big)^2(|z_1|^4)\Big|_0+\frac{8}{(\mu_1)^2}\frac{\partial^2  g^{1\bar 1}}{\partial w_1\partial\overline{ w}_1}\Big|_0=\frac{12\lambda+16}{(\mu_1)^2}.
\end{equation}
Furthermore, if $r\neq 1$, namely $M$ is different from a complex projective space, we also compute
$$\Delta^3   \big(|z_1z_2|^2\big)\Big|_0=\frac{3\lambda}{\mu_1\mu_2}\big(\Delta_c^{w}\big)^2  (|z_1z_2|^2)\Big|_0+$$
\begin{equation}\label{comp2}
+4\left(\frac{1}{(\mu_1)^2} \frac{\partial^2 g^{2\bar 2}}{\partial w_1\partial\overline{ w}_1}+\frac{1}{(\mu_2)^2} \frac{\partial^2 g^{1\bar 1}}{\partial w_2\partial\overline{ w}_2}+\frac{1}{\mu_1\mu_2} \frac{\partial^2 g^{1\bar 2}}{\partial w_2\partial\overline{ w}_1}+\frac{1}{\mu_1\mu_2} \frac{\partial^2 g^{2\bar 1}}{\partial w_1\partial\overline{ w}_2} \right)=\frac{6\lambda}{\mu_1\mu_2}.
\end{equation}

If $M$ has rank greater than $1$, 
let us assume by contradiction that the $\Delta$-property is valid, in particular around each point of $M$ there exists a local coordinate system with respect to which \eqref{affine} is satisfied for $k=1,2,3$. Let us denote such coordinate system by $f=(f_1,\mathellipsis,f_n)$.

Since we have shown in Theorem \ref{ke} that every second order derivative of the holomorphic change of coordinates sending $f$ to $\tilde z$ vanish at $f=0$,
we get
$$\Delta^{3}\phi(0)=\Big((\Delta_c^{ f})^{3}+\sum_{i=1}^2 a_i(\Delta_c^{ f})^i\Big)\phi(0)=$$
$$=\Big(\sum_{i=1}^2 a_i(\Delta_c^{\tilde z})^i\Big)\phi\Big|_0+\sum_{i_1,i_2,i_{3},\alpha,\beta}\frac{\de^{3} \tilde z_\alpha}{\de { f}_{i_1} \de { f}_{i_2}\de { f}_{i_{3}}}\overline{\frac{\de^{3} \tilde z_\beta}{\de  { f}_{i_1} \de { f}_{i_2}\de { f}_{i_{3}}}}\frac{\de^2\phi}{\de \tilde z_\alpha\de\bar{\tilde z}_\beta}\Big|_0+$$
$$+(\Delta_c^{\tilde z})^{3}\phi\Big|_0+\sum_{\substack{i_1,i_2,i_{3}\\\alpha_1,\mathellipsis,\alpha_{4}}}\frac{\de^{3} \tilde z_{\alpha_{4}}}{\de { f}_{i_1} \de { f}_{i_2}\de { f}_{i_{3}}}\prod_{l=1}^{3} \overline{\frac{\de {\tilde z}_{\alpha_l}}{\de {  f}_{i_l}}}\frac{\de^{4}\phi}{\de\bar{ \tilde z}_{\alpha_1}\de\bar{ \tilde z}_{\alpha_2}\de\bar{\tilde z}_{\alpha_{3}}\de{\tilde z}_{\alpha_{4}}}\Big|_0+ $$
$$+\sum_{\substack{i_1,i_2,i_{3}\\\alpha_1,\mathellipsis,\alpha_{4}}}\overline{\frac{\de^{3} \tilde z_{\alpha_{4}}}{\de { f}_{i_1} \de { f}_{i_2}\de { f}_{i_{3}}}}\ \prod_{l=1}^{3} {\frac{\de {\tilde z}_{\alpha_l}}{\de {  f}_{i_l}}}\frac{\de^{4}\phi}{\de \tilde z_{\alpha_1}\de { \tilde z}_{\alpha_2}\de{\tilde z}_{\alpha_{3}}\de\bar{\tilde z}_{\alpha_{4}}}\Big|_0. $$
The previous formula implies   the relation
$$\Delta^3  \big(|z_1|^4\big)(0)=2\frac{\mu_2}{\mu_1}\Delta^3  \big(|z_1z_2|^2\big)(0),$$ therefore we have a contradiction from the comparison with \eqref{comp1} and \eqref{comp2}.
\end{proof}

\begin{rem}
\rm Notice that we have proved the stronger statement that the complex projective spaces are the unique classical irreducible HSSCT such that around any point there exists a local coordinate system with respect to which (\ref{affine}) is satisfied for $k=1,2,3$.
\end{rem}

\begin{rem}
\rm Theorem \ref{hssct} proves also that  the $\Delta$-property cannot be  satisfied in any not irreducible classical HSSCT, because they contain an embedded $(\CP_2^1,g_{FS}^2)$ whose \K immersion's equations  locally reads with respect to holomorphic normal coordinates as in Lemma \ref{APcoord}. 
\end{rem}

Finally, we can prove our second main result.
\vspace{0.3cm}

\noindent{\it Proof of Theorem \ref{mainteor2}. }Every Hermitian symmetric space\footnote{From now on HSS.} can be decomposed as a  \K product
\begin{equation*}\label{decomposition}
(\C^n,g_0)\times (C_1,g_1)\times\mathellipsis\times( C_h,g_h)\times (N_1, \hat g_1)\times\mathellipsis\times (N_l, \hat g_l),
\end{equation*}
where $(\C^n,g_0)$ is  the flat Euclidean space, $(C_i,g_i)$ are  irreducible HSSCT and $(N_i, \hat g_i)$ are irreducible HSS of noncompact type\footnote{Namely $ N_i$ is a  bounded symmetric domains with a multiple of the Bergman metric denoted by $\hat g_i$.}.

By Theorem \ref{ke}, a  HSS where \eqref{affine} is fulfilled for $k=1,2$, is the flat Euclidean space otherwise  it is a \K product of  HSS of  either compact  or   noncompact type.  Hence, we  are going to prove our statement by characterizing hyperbolic spaces among classical HSS of noncompact type  in analogy with what we have done for projective spaces  in Theorem \ref{hssct}. 

Let $z$ be the restriction of Euclidean coordinates to a classical bounded symmetric domain $(N,\hat g)$
such that $\hat g$ is a \KE metric. Let $(N^*,\hat g^*)$  be its compact  dual.
We can think $z$ as the restriction of Alekseevsky-Perelomov coordinates of $N^*$ to $N$. Furthermore a \K\ potential $\Phi$ for the metric $\hat g$
is given by 
\begin{equation}\label{ff}
\Phi (z, \bar z)=-\Phi^*(z,-\bar z)_{|N}
\end{equation}
where $\Phi^*(z, \bar z)$ is a \K\ potential for $\hat g^*$ (see \cite{symplsymm} and \cite{sympldual} for details).

By \eqref{ff} and   \eqref{laplcube}, we get
$$\Delta_N^3\big (|z_iz_j|^2\big)(0)=-\Delta_{N^*}^3\big (|z_iz_j|^2\big)(0)$$
for every $1\leq i,j\leq \dim(N)$. Hence, if $N$ and $N^*$ are not irreducible or else if they are irreducible but they have rank different from $1$, namely $N$ is not a hyperbolic space, \eqref{affine} for $k=3$ cannot be satisfied as proved in Theorem \ref{hssct}.\hfill $\Box$


\begin{thebibliography}{99}
\bibitem{AlPer} D. V. Alekseevsky, A. M. Perelomov. {\em Invariant Kaehler-Einstein metrics on compact homogeneous spaces}, Funct. Anal. Appl. 20  (1986), no. 3, 171-182.
\bibitem{symplsymm} A. Di Scala, A. Loi. {\em Symplectic duality of symmetric spaces}, Adv. Math. 217 (2008), no. 5, 2336-2352.
\bibitem{sympldual} A. Di Scala, A. Loi, F. Zuddas. {\em Symplectic duality between complex domains}, Monatsh. Math. 160 (2010), no. 4, 403-428.
\bibitem{bochnerflag}  A. Loi, R. Mossa, F. Zuddas. {\em  Bochner coordinates on flag manifolds},  Bull. Braz. Math. Soc. (N.S.) 50 (2019), no. 2, 497-514.
\bibitem{thirdcoeff}  A. Loi, F. Salis, F. Zuddas. {\em On the third coefficient of TYZ expansion for radial scalar flat metrics},   J. Geom. Phys. 133 (2018), 210-218.
\bibitem{lutian} Z. Lu, G. Tian. {\em The log term of the \s}, Duke Math. J. 125 (2004), no. 2, 351-387.


\end{thebibliography}
\end{document}